\documentclass[10pt]{article}
\usepackage[centertags]{amsmath}
\usepackage{amsfonts}
\usepackage{amssymb}
\usepackage{amsthm}
\setlength{\textwidth}{6.25in} \setlength{\textheight}{8.5in}
\setlength{\columnsep}{1.0cm} \setlength{\oddsidemargin}{-0.5cm}
\setlength{\columnsep}{1.0cm} \setlength{\topmargin}{-2.0cm}
\setlength{\textwidth}{17.4cm} \setlength{\textheight}{23.2cm}

\newcommand{\ex}{{\bf\sf E}}               




\newcommand{\call}{{\cal L}}

\newcommand{\calp}{{\cal P}}

\newcommand{\calk}{{\cal K}}
\newcommand{\cals}{{\cal S}}

\newcommand{\calf}{{\cal F}}

\newcommand{\al}{\alpha}                
\newcommand{\s}{\sigma}               

\newcommand{\ra}{\rightarrow}           



\newtheorem{thm}{Theorem}
\newtheorem{lem}{Lemma}

\newcommand{\Real}{\mathbb R}

\begin{document}

\baselineskip20pt

\title{Local and Global Existence of A Nonlocal Equation with A Singular Integral Drift Term}
\author{Yingdong Lu \\Mathematical Sciences\\ IBM T.J. Watson Research Center \\Yorktown Heights, NY 10598}
\date{}
\maketitle

\begin{abstract}
We study an initial value problem with fractional Laplacian and a singular drift term, and obtain local and global existence theorems similar to the results in \cite{Jourdain2005APA}.

\end{abstract}

\section{Introduction}
\label{sec:intro}

Consider the following initial value problem,  
\begin{align}
\label{eqn:main}
\left\{ \begin{array}{ccc}\partial_t u(t, x) & = &-(-\Delta)^{\al/2} u -\nabla \cdot (uB(u)),\\
u(0,x) &=&u_0(x), \end{array}  \right.
\end{align}
where $u:{\mathbb R}_+ \times {\mathbb R}^d\rightarrow {\mathbb R}$ for positive integer $d$ and $\al\in(1,2)$. 
The operator  $ -(-\Delta)^{\al/2}$ is the fractional power of the Laplacian $\Delta$, analytically, it can be defined as, 
\begin{align}
\label{eqn:fracLapAna}  -(-\Delta)^{\al/2} v(x) = \calf^{-1} (|\xi|^\al \calf(v) (\xi)) (x),
\end{align}
for any Schwartz function $v\in \cals$, with $\calf$ denoting the Fourier operator. Probabilistically, it can be also viewed as a Markov jump process operator, thus, has the following equivalent form, 
\begin{align}
\label{eqn:fracLapProb}  -(-\Delta)^{\al/2} v(x) =K\int_{{\mathbb R}^d} (v(x+y) -v(x) -\nabla v(x) \cdot  y{\bf 1}_{|y|\le 1}) \frac{dy} {|y|^{d+\al}},
\end{align}
where $K=K_{\al,d}$ is a constant. $B(u)$ is a singular integral operator defined by,
\begin{align}
\label{eqn:integralOp}
B(u) (x) = \int_{{\mathbb R}^d} b(x,y) u(y) dy,
\end{align}
with a Calder\'on-Zygmund singular integral kernel $b(x,y)$.  Recall the conditions that a Calder\'on-Zygmund kernel has to satisfy:  
there are constants $C$ and $\delta>0$, such that for any $x,y \in \Real^d$, 
\begin{align*}
	|b(x,y)| &\le \frac{C}{|x-y|^d}, \\
	|b(x,y)-b(x', y)| &\le \frac{C|x-x'|}{(|x-y|+|x'-y|)^\delta}, \quad \text{whenever $|x-x'| \le \frac12 \max (|x-y|, |x'-y|)$} \\
	|b(x,y')-b(x, y)| &\le \frac{C|y-y'|}{(|x-y|+|x'-y|)^\delta}, \quad \text{whenever $|y-y'| \le \frac12 \max (|x-y|, |x'-y|)$}
\end{align*}
For more detailed analysis on integration with respect to Calder\'on-Zygmund kernel, see, e.g.,  ~\cite{stein1993harmonic}. 
We also denote the singular integral operator as $\calk_b$, hence, write $B(u) (x)=\calk_b u(x)$.

Equation \eqref{eqn:main}, in various forms, has been studied in both mathematical and physics literature. In~\cite{MANNJR2001159}, 
one form of this equation characterizes fractal interfaces in statistical mechanics in the presence of self-similar hopping surface diffusion, 
and generalizes the classical Kardar–Parisi–Zhang (KPZ) model. Regularity and conservation laws for \eqref{eqn:main} with different drift $B(u)$, are considered 
in~\cite{doi:10.1137/S0036139996313447, BILER2001613, Jourdain2005APA}. When $B(u)$ is a more regular operator, where $b(x,y)$ is a convolutional kernel satisfy necessary bound on value and 
derivative such that the integral operator is $(p,\infty)$ ($(p,q)$ refers to bounded operator from $L_p$ to $L_q$) for 
some $p> d/\beta$, the local and global existences of \eqref{eqn:main} are obtained in \cite{Jourdain2005APA}. In this paper, we deal with the case where $B(u)$ is represented by a general Calder\'on-Zygmund operator. The integration is singular, the boundedness of the operator is weaker. Our results consists of identifying the function spaces in which local and global existences results of \eqref{eqn:main} can be derived. 

In Sec. \ref{sec: materials}, we will provide necessary background material and notations; in Sec. \ref{sec:local}, we will present the local existence results; and the global existence results will be presented in Sec. \ref{sec:global}.

\section{Notation and Basic Formulas}
\label{sec: materials}

\subsection{Function spaces and norms}

For $0<p<\infty$, the Calder\'on-Zygmund operator $\calk_b$ is known to be $(p,p)$, i.e. bounded operator maps $L^p$ functions to $L^p$ functions. More precisely, there exists a constant $A_p$, such that $\|\calk_b f\|_p \le A_p\|f\|_p$ for any function $f\in L^p(\Real^d)$ with the $L^p$ norm $\|\cdot \|_p$ defined as $ \| f\|_p:=( \int_{\Real^d} |f(x)|^p dx)^{1/r}$, and the space $L^p(\Real^d)$ includes all the measurable functions that has finite $L^p$ norm. In addition, the operator $\calk_b$ is bounded on Lipschitz space $Lip(\epsilon)$ for any $\epsilon\le \delta$ with $\delta$ being the parameter in the Calder\'on-Zygmund kernel $K1=0$ by the main theorem (Theorem 1.6) in~\cite{ZhengLiTao2019}. Recall that $Lip(\al)$ refers the space of functions satisfies $|f(x)-f(y)|\le Cd(x,y)^\al$.

The function space in which we will derive the local existence theorem is defined to be $L_x^pL_t^\infty(\Real^d\times [0,t])$ the space of all functions whose $L_x^pL_t^\infty$ norm is finite, with 
\begin{align*}
||f(x,t) \|_{L_x^pL_t^\infty} := \sup_{0\le s\le t} \left(\int_{x\in\Real^d} |f(x,s)|^p dx\right)^{1/p}.
\end{align*}

\subsection{Weak Solution}

The weak solution, for any test function $\psi(x, s)$ in $C^\infty_0({\mathbb R}^d\times {\mathbb R}_+ )$, a function $u_t(x)$ in a suitable function space is a weak solution if the following is satisfied, 
\begin{align}
\int_{{\mathbb R}^d}  \psi(x,t)u_t(x) dx - \int_{{\mathbb R}^d}  \psi(x, 0)u_0(x) dx =& \int_0^t 
\int_{{\mathbb R}^d}  \left[\frac{\partial}{\partial s}\psi(x, s)- (-\Delta)^{\al/2}\psi(s,x) + B(u_s)(x). \nabla \psi(x,s)\right] u_s(x) dx ds. \label{eqn:weakSoln}
\end{align}

\subsection{Semigroup and generator}
The generator for semi-group $\exp(-t(-\Delta)^{\al/2}) $ is denoted as $p^\al_t$. For any smooth function $\phi$, then $\Psi(s, x) = p_{t-s}^\al \star \phi(x)$ satisfies,
\begin{align*}
	\frac{\partial }{\partial s} \Psi(s,x) - (-\Delta)^{\al/2} \Psi(\s,x)=0.
\end{align*} 
It is known that, see, e.g. \cite{Jourdain2005APA},
\begin{lem} 
\label{lem:operator_p}
	If $m \ge q\ge 1$, and $f\in L^q$, then,
	\begin{align*}
		\| p^\al_t \star f  \|_m &\le C t^{-\frac{d}{\al}\left(\frac{1}{q}-\frac{1}{m}\right)} \|f \|_q;\\
		\| \nabla p^\al_t \star f  \|_m &\le C t^{-\frac{d}{\al}\left(\frac{1}{q}-\frac{1}{m}\right)-\frac{1}{\al}} \|f \|_q;
	\end{align*}
\end{lem}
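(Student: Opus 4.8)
The plan is to exploit the exact self-similar scaling of the fractional heat kernel together with Young's convolution inequality. From the Fourier characterization $\widehat{p_t^\al}(\xi) = e^{-t|\xi|^\al}$, the substitution $\xi = t^{-1/\al}\eta$ shows that the kernel obeys the scaling relation
\begin{align*}
p_t^\al(x) = t^{-d/\al}\, P\!\left(t^{-1/\al} x\right), \qquad P := p_1^\al,
\end{align*}
and differentiating in $x$ gives $\nabla p_t^\al(x) = t^{-(d+1)/\al}(\nabla P)(t^{-1/\al}x)$. Thus the whole problem reduces to controlling the fixed time-one profile $P$ and its gradient, with all $t$-dependence coming from the scaling.

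First I would record the integrability of $P$. The symmetric $\al$-stable density $P$ is smooth, bounded near the origin, and satisfies the sharp decay $|P(z)| + |\nabla P(z)| \le C(1+|z|)^{-(d+\al)}$; since $(d+\al)r > d$ for every $r \ge 1$, this yields $P, \nabla P \in L^r(\Real^d)$ for all $r \in [1,\infty]$. This decay bound, obtained either from the known asymptotics of $\al$-stable densities or from a stationary-phase analysis of the oscillatory integral defining $P$, is the one genuinely analytic ingredient, and I expect it to be the main technical point; everything downstream is elementary bookkeeping.

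Next, a change of variables $z = t^{-1/\al}x$ in the scaling relation computes the $L^r$ norms of the time-$t$ kernels:
\begin{align*}
\| p_t^\al \|_r = t^{-\frac{d}{\al}\left(1-\frac{1}{r}\right)} \|P\|_r, \qquad \| \nabla p_t^\al \|_r = t^{-\frac{d}{\al}\left(1-\frac{1}{r}\right)-\frac{1}{\al}} \|\nabla P\|_r .
\end{align*}
Finally I would apply Young's inequality $\|g\star f\|_m \le \|g\|_r\|f\|_q$ with the exponents tied by $\tfrac1r = 1 + \tfrac1m - \tfrac1q$. The admissibility constraint $r \ge 1$ is exactly the hypothesis $m \ge q$, while $r \le \infty$ follows from $q \ge 1$; substituting this $r$ into the two displayed norms collapses the exponent $1-\tfrac1r$ to $\tfrac1q-\tfrac1m$, producing the claimed rates $-\tfrac{d}{\al}(\tfrac1q-\tfrac1m)$ and $-\tfrac{d}{\al}(\tfrac1q-\tfrac1m)-\tfrac1\al$ respectively. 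For the second estimate one uses in addition $\nabla(p_t^\al\star f) = (\nabla p_t^\al)\star f$ to move the derivative onto the kernel before invoking Young. This completes the argument.
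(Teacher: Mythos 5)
Your proof is correct. The paper does not prove this lemma at all---it is stated as known and cited to \cite{Jourdain2005APA}---and your argument (self-similar scaling $p_t^\al(x)=t^{-d/\al}P(t^{-1/\al}x)$, integrability of $P$ and $\nabla P$ from the $(1+|z|)^{-(d+\al)}$ stable-density decay, then Young's inequality with $\tfrac1r=1+\tfrac1m-\tfrac1q$) is exactly the standard proof supplied in that reference, so it fills the citation correctly.
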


\section{Local Existence}
\label{sec:local}

The following version of the Banach fixed point theorem, can be found in e.g.~\cite{cannone1995ondelettes},
\begin{lem}
	\label{lem:fixed_point}
Suppose that $B:X\times X\rightarrow X$ is a bilinear mapping for a Banach space $(X, \|\cdot\|_X)$, and it satisfies, 
\begin{align*}
||B(x_1, x_2)||_X \le \eta || x_1||_X ||x_2||_X,
\end{align*}
for some $\eta>0$. Then, for each $y \in X$ satisfying $4\eta||y||<1$, equation,
\begin{align*}
x=y+B(x,x)
\end{align*}
admits a unique solution $x$ in the ball $\{ z\in X, ||z||\le R\}$  with $R=\frac{1-\sqrt{1-4\eta ||y||}}{2\eta}$. Moreover, the solution satisfies inequality $||x||_X \le 2||y||_X$.
\end{lem}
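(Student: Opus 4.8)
The plan is to recast $x = y + B(x,x)$ as a fixed-point problem for the map $T(x) := y + B(x,x)$ and apply the contraction mapping principle on the closed ball $\Bbar_R := \set{z \in X : \|z\|_X \le R}$, which is itself a complete metric space as a closed subset of the Banach space $X$. Everything hinges on the quadratic $\eta R^2 - R + \|y\|_X = 0$, whose roots are $R_\pm = \frac{1 \pm \sqrt{1 - 4\eta\|y\|_X}}{2\eta}$. The hypothesis $4\eta\|y\|_X < 1$ guarantees these roots are real and positive, and the stated radius is the smaller root, $R = R_-$.

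First I would verify that $T$ maps $\Bbar_R$ into itself. For $\|x\|_X \le R$, the bilinear bound gives
\begin{align*}
\|T(x)\|_X \le \|y\|_X + \eta\|x\|_X^2 \le \|y\|_X + \eta R^2 = R,
\end{align*}
where the last equality is exactly the defining relation of $R$ as a root of the quadratic; hence $T(\Bbar_R) \subseteq \Bbar_R$. Next I would show $T$ is a strict contraction on $\Bbar_R$. Using bilinearity to write $B(x_1,x_1) - B(x_2,x_2) = B(x_1 - x_2, x_1) + B(x_2, x_1 - x_2)$, the same bound yields
\begin{align*}
\|T(x_1) - T(x_2)\|_X \le \eta\left(\|x_1\|_X + \|x_2\|_X\right)\|x_1 - x_2\|_X \le 2\eta R \, \|x_1 - x_2\|_X.
\end{align*}
Since $R = R_-$, we have $2\eta R = 1 - \sqrt{1 - 4\eta\|y\|_X} < 1$, so $T$ is a strict contraction with an explicit Lipschitz constant strictly below $1$. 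The Banach fixed point theorem then furnishes a unique $x \in \Bbar_R$ with $T(x) = x$, which is precisely the unique solution of the equation in the ball.

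Finally, the bound $\|x\|_X \le 2\|y\|_X$ follows because $\|x\|_X \le R$ together with $R \le 2\|y\|_X$: setting $t = 4\eta\|y\|_X \in (0,1)$, the inequality $R \le 2\|y\|_X$ is equivalent to $1 - \sqrt{1-t} \le t$, i.e. $1 - t \le \sqrt{1-t}$, which holds since $\sqrt{1-t} \le 1$. I do not anticipate a genuine obstacle here, as the argument is a textbook application of the contraction mapping principle; the only care required is the algebraic bookkeeping around the quadratic, in particular selecting the smaller root $R_-$ so that the self-mapping property holds with equality while the contraction constant $2\eta R$ remains strictly less than $1$.
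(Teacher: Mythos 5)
Your proof is correct. Note that the paper itself gives no proof of this lemma at all --- it simply cites the reference \cite{cannone1995ondelettes} --- so your contraction-mapping argument (self-mapping of the closed ball via the root identity $\eta R^2 + \|y\|_X = R$, contraction constant $2\eta R = 1-\sqrt{1-4\eta\|y\|_X} < 1$, and the elementary inequality $1-\sqrt{1-t}\le t$ for the bound $\|x\|_X \le 2\|y\|_X$) supplies exactly the standard argument the paper leaves implicit, with all the algebraic details in order.
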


Previous results assume that the kernel 
$|b(x,y)| \le C|x|^{\beta-d}$
for some $\beta>0$. This will lead to $||B(u)||_\infty \lesssim  ||u||$. 
In order to apply the fixed point theorem Lemma \ref{lem:fixed_point}, define the following bilinear map,
\begin{align}
\label{def:bilinear}
	B(u,v) (t,x) = \int_0^t \nabla p_{t-s}^\al \star (B(v_s) u_s) ds.
\end{align}
\begin{lem}
\label{lem:boundness}
For any $p\ge 2$, there exists a constant $C>0$, such that, 
\begin{align*}
	& \left\| \int_0^t \nabla p_{t-s}^\al \star (B(v_s) u_s) ds\right\|_{L_x^pL_T^\infty} \le C  T^{1-\frac{1}{\al}}  \left\|v\right\|_{L_x^{p}L_T^\infty} \left\|u \right\|_{L_x^{p}L_T^\infty}.  
\end{align*}
\end{lem}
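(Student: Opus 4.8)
The plan is to factor the bilinear estimate into a linear smoothing bound for the Duhamel operator $G(f)(t,x):=\int_0^t \nabla p_{t-s}^\al\star f_s\,ds$ and a pointwise product bound for the integrand $f_s=\calk_b(v_s)\,u_s$. Throughout I read the mixed norm with the time supremum inside the spatial integral, as the notation $L_x^pL_T^\infty$ prescribes, so that $\|g\|_{L_x^pL_T^\infty}=\big\|\sup_{r\le T}|g(\cdot,r)|\big\|_p$; the clean power $T^{1-1/\al}$ will then come from the $L^1$ mass of a time-integrated kernel, not from any gain of spatial integrability.

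For the linear part I would argue pointwise in $x$. Writing $\Phi(y):=\sup_{r\le T}|f(y,r)|$ and using $|\nabla p_{t-s}^\al\star f_s(x)|\le(|\nabla p_{t-s}^\al|\star\Phi)(x)$, the substitution $\sigma=t-s$ and the nonnegativity of the kernel give, for every $t\le T$,
\[
\Big|\int_0^t \nabla p_{t-s}^\al\star f_s(x)\,ds\Big|\le\int_0^T(|\nabla p_\sigma^\al|\star\Phi)(x)\,d\sigma=(Q_T\star\Phi)(x),\qquad Q_T:=\int_0^T|\nabla p_\sigma^\al|\,d\sigma .
\]
Taking the supremum over $t\le T$ and then the $L_x^p$ norm, Young's inequality yields $\|G(f)\|_{L_x^pL_T^\infty}\le\|Q_T\|_1\,\|\Phi\|_p=\|Q_T\|_1\,\|f\|_{L_x^pL_T^\infty}$, so the whole linear step reduces to the $L^1$ mass of the time-integrated gradient kernel. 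Here the power of $T$ appears: by the self-similarity $\nabla p_\sigma^\al(x)=\sigma^{-(d+1)/\al}(\nabla p_1^\al)(\sigma^{-1/\al}x)$ (consistent with Lemma \ref{lem:operator_p} at $m=q=1$) one has $\|\nabla p_\sigma^\al\|_1\le C\sigma^{-1/\al}$, and since $\al\in(1,2)$ the exponent $-1/\al>-1$ is integrable, so $\|Q_T\|_1\le\frac{C}{1-1/\al}\,T^{1-1/\al}$. I stress that one must keep the kernel in $L^1$ at this stage: invoking the smoothing of Lemma \ref{lem:operator_p} from $L^{p/2}$ to $L^p$ would trade $-1/\al$ for $-1/\al-d/(\al p)$ and produce a strictly different (smaller) power of $T$ together with a spurious integrability restriction on $p$.

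It then remains to prove the product estimate $\|\calk_b(v)\,u\|_{L_x^pL_T^\infty}\le C\,\|v\|_{L_x^pL_T^\infty}\|u\|_{L_x^pL_T^\infty}$, i.e. $\|\Phi\|_p\le C\|v\|\,\|u\|$. Pointwise $\Phi(y)\le\big(\sup_r|\calk_b(v_r)(y)|\big)\big(\sup_r|u_r(y)|\big)$, and I would estimate the $L^p$ norm of this product by Hölder's inequality paired with the Lebesgue boundedness of the Calderón--Zygmund operator $\calk_b$. This is the step I expect to be the main obstacle. Because $\calk_b$ is only $(p,p)$-bounded and does \emph{not} map into $L^\infty$ — in contrast to the more regular kernels of \cite{Jourdain2005APA}, for which $\|B(u)\|_\infty\lesssim\|u\|$ and the product splits trivially — one cannot separate the two factors naively, since a product of two $L^p$ functions lies a priori only in $L^{p/2}$. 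I plan to handle this by applying Hölder at a dual pair of exponents and using the boundedness of $\calk_b$ at the correspondingly raised integrability, together with a control of the time-maximal function $\sup_r|\calk_b(v_r)|$ obtained from the linearity of $\calk_b$ and the $L^p$ bound on the envelope of $v$. Arranging that both resulting factors are measured at exactly the integrability demanded by the statement — rather than at the higher exponents that Hölder naturally forces — is the delicate matching that the Calderón--Zygmund theory, and not mere pointwise kernel decay, must supply.
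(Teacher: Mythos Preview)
Two points of divergence from the paper, the second of which is a real gap.

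First, the mixed norm. Despite the ordering of the subscripts, the paper \emph{defines} $\|f\|_{L_x^pL_T^\infty}:=\sup_{s\le T}\|f(\cdot,s)\|_p$, i.e.\ the time supremum sits \emph{outside} the spatial integral. You read it the other way. With the paper's convention every estimate can be carried out at a fixed time $s$ and the supremum taken at the very end; no time-maximal function $\sup_r|\calk_b(v_r)|$ ever enters.

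Second, the product estimate you isolate as ``the main obstacle'' is not merely delicate: it is false. Under either reading of the norm, the hypotheses only place $\calk_b(v_s)$ and $u_s$ in $L^p_x$, so their product lies in $L^{p/2}_x$ and no better. Calder\'on--Zygmund theory gives $(q,q)$ bounds for every $1<q<\infty$ but never a $(p,2p)$ bound, so there is no ``dual pair of exponents'' at which H\"older can close with both factors measured at $L^p$; and since $\calk_b$ is not positivity-preserving, there is no pointwise domination $\sup_r|\calk_b(v_r)|\le |\calk_b(\sup_r|v_r|)|$ to fall back on either. The sentence ``the Calder\'on--Zygmund theory \ldots\ must supply'' the missing integrability is exactly where the argument breaks: CZ theory has nothing further to offer here beyond $(p,p)$.

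The paper's route is precisely the one you reject. It applies Lemma~\ref{lem:operator_p} with $m=p$, $q=p/2$ \emph{before} splitting the product: Minkowski in $t$, then $\|\nabla p_{t-s}^\al\star(B(v_s)u_s)\|_p\le C(t-s)^{-d/(\al p)-1/\al}\|B(v_s)u_s\|_{p/2}$, then H\"older and the $(p,p)$ bound for $\calk_b$ give $\|B(v_s)u_s\|_{p/2}\le A_p\|v_s\|_p\|u_s\|_p$. The loss of integrability in the product is absorbed by the smoothing of the kernel, not by the singular integral. You are right that this produces the time weight $(t-s)^{-d/(\al p)-1/\al}$ rather than $(t-s)^{-1/\al}$, so the integration in the paper's step~(e) actually yields $T^{\,1-1/\al-d/(\al p)}$ together with the restriction $p>d/(\al-1)$; the stated exponent $T^{1-1/\al}$ is a slip in the paper. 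But that discrepancy concerns the value of the exponent, not the viability of the method: without the $L^{p/2}\to L^p$ smoothing the bilinear estimate simply does not close.
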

\begin{proof}
\begin{align*}
	 \left\| \int_0^t \nabla p_{t-s}^\al \star (B(v_s) u_s) ds\right\|_{L_x^pL_T^\infty}  \stackrel{(a)}{\le} & 
	  \int_0^T \left\|\nabla p_{t-s}^\al \star (B(v_s) u_s) \right\|_{L_x^p}  ds  \\ \stackrel{(b)}{\le} & C \int_0^T (t-s)^{-\frac{d}{\al} \left(\frac{1}{p}\right)-\frac{1}{\al}} \left\|(B(v_s) u_s) \right\|_{L_x^{\frac{p}{2}}}  ds   \\ \stackrel{(c)}{\le}  & C \int_0^T(t-s)^{-\frac{d}{\al} \left(\frac{1}{p}\right)-\frac{1}{\al}} \left\|(B(v_s) \right\|_{L_x^p} \left\|u_s \right\|_{L_x^{p}}   ds  
	  \\ \stackrel{(d)}{\le} & C' \int_0^T (t-s)^{-\frac{d}{\al} \left(\frac{1}{p}\right)-\frac{1}{\al}} \left\|v_s \right\|_{L_x^{p}} \left\|u_s \right\|_{L_x^{p}}   ds   \\ \stackrel{(e)}{\le} & C'  T^{1-\frac{1}{\al}}  \left\|v\right\|_{L_x^{p}L_T^\infty} \left\|u \right\|_{L_x^{p}L_T^\infty}	  	  
\end{align*}
where (a) is due to the nonnegativity of the integrand; (b) comes from the properties of operator $p_t^\al$ in Lemma \ref{lem:operator_p} with $m=p, q=\frac{p}{2}$; (c) is an application of the H\"older's inequality; (d) follows from the boundedness of the Calder\'on-Zygmund operator; and (e) is the result of a simple integration calculation.
\end{proof}
Then, apply the fixed point theorem, Lemma \ref{lem:fixed_point}, the following local existence theorem can be established. 
\begin{thm}
For any $p>2$, and $u_0\in L_x^p$, there exist a a constant $T^*>0$ and function $u(x,t) \in L_x^pL_{T^*}^\infty$ such that $u(x,t)$ is a weak solution, in the sense of \eqref{eqn:weakSoln}, to \eqref{eqn:main}.
\end{thm}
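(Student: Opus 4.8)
The plan is to recast \eqref{eqn:main} as a fixed-point equation in the Banach space $X = L_x^p L_{T}^\infty(\Real^d\times[0,T])$ and to invoke Lemma~\ref{lem:fixed_point}. By Duhamel's principle, a solution of \eqref{eqn:main} should satisfy the mild formulation
\begin{align*}
u(t,x) = p_t^\al \star u_0(x) - \int_0^t \nabla p_{t-s}^\al \star \bigl(B(u_s)\,u_s\bigr)\,ds,
\end{align*}
where the divergence in $-\nabla\cdot(uB(u))$ has been transferred onto the kernel so that the nonlinear term is precisely the bilinear map \eqref{def:bilinear} evaluated on the diagonal. Writing $y(t,x) := p_t^\al \star u_0(x)$, the mild equation reads $u = y - B(u,u)$; since $-B$ is again bilinear with the same norm, this is of the form $x = y + \tilde B(x,x)$ treated in Lemma~\ref{lem:fixed_point}.

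First I would verify $y\in X$: applying Lemma~\ref{lem:operator_p} with $m=q=p$ gives $\|p_s^\al\star u_0\|_p \le C\|u_0\|_p$ for every $s\ge 0$, the time exponent vanishing because $\tfrac1q-\tfrac1m=0$, and taking the supremum over $s\in[0,T]$ yields $\|y\|_X \le C\|u_0\|_p<\infty$ as $u_0\in L_x^p$. The required bilinear estimate $\|B(u,v)\|_X \le \eta\,\|u\|_X\|v\|_X$ with $\eta=\eta(T)=C\,T^{1-1/\al}$ is exactly Lemma~\ref{lem:boundness}, whose hypothesis $p\ge 2$ is met since $p>2$.

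It then remains to choose $T$ small. Because $\al\in(1,2)$ we have $1-1/\al>0$, so $\eta(T)\to 0$ as $T\to 0$; hence the smallness condition $4\,\eta(T)\,\|y\|_X \le 4C^2 T^{1-1/\al}\|u_0\|_p<1$ holds for all sufficiently small $T^*>0$ (depending on $\|u_0\|_p$, $\al$, $d$, $p$). For such $T^*$, Lemma~\ref{lem:fixed_point} produces a unique $u\in X=L_x^pL_{T^*}^\infty$ with $\|u\|_X\le 2\|y\|_X$ solving the mild equation.

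The remaining, and I expect most delicate, step is to show that this mild solution is a weak solution in the sense of \eqref{eqn:weakSoln}. The idea is to test the integral identity against $\psi\in C_0^\infty(\Real^d\times\Real_+)$, to integrate by parts in $x$ so as to move $\nabla$ back onto $B(u_s)u_s$, and to use the adjoint relation $\partial_s\Psi-(-\Delta)^{\al/2}\Psi=0$ for $\Psi(s,x)=p_{t-s}^\al\star\psi$ recorded in Sec.~\ref{sec: materials} to regenerate the $\partial_s\psi$ and $-(-\Delta)^{\al/2}\psi$ terms of \eqref{eqn:weakSoln}. The main obstacle is justifying the Fubini interchanges and the integration by parts using only the $L_x^pL_{T^*}^\infty$ regularity of $u$ together with the singular operator $\calk_b$: one controls $B(u_s)u_s$ by H\"older's inequality and the $(p,p)$ boundedness of $\calk_b$, giving $B(u_s)u_s\in L^{p/2}$ uniformly in $s$, and must check that all space-time integrals converge absolutely so the manipulations are legitimate. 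A density argument, smoothing $u_0$ and passing to the limit, then upgrades the formal identity to \eqref{eqn:weakSoln}.
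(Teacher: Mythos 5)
Your proposal takes essentially the same route as the paper: the paper's proof simply invokes Lemma~\ref{lem:boundness} to verify the hypothesis of the fixed point theorem, Lemma~\ref{lem:fixed_point}, for the bilinear map \eqref{def:bilinear}, exactly as you do. In fact your write-up is more complete than the paper's two-sentence argument, since you additionally verify $p_t^\al\star u_0\in L_x^pL_T^\infty$, make explicit the smallness condition $4CT^{1-1/\al}\|y\|_X<1$ (using $\al>1$), and flag the mild-to-weak-solution verification, all of which the paper leaves implicit.
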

\begin{proof}
Lemma \ref{lem:boundness} indicates that the bilinear form defined in \eqref{def:bilinear} satisfies the condition for the fixed point theorem, Lemma \ref{lem:fixed_point}. Hence, we can conclude that, for any function $u_0\in L_x^pL_{T^*}^\infty$, there exist a constant $T^*>0$ and a weak solution $u(x,t) \in L_x^pL_{T^*}^\infty$ in the sense that is defined in \eqref{eqn:weakSoln}.
\end{proof}

\section{Global Existence}
\label{sec:global}

The goal of this section is to establish the global existence of \eqref{eqn:main} via a combined probabilistic and analytic argument. Precisely, global existence means that for any given time horizon $T< \infty$, the solution $u(t,x)$ solves the equation in a weak sense.  The basic approach is a probabilistic one, we will construct a solution to a stochastic differential equation, \eqref{eqn:main_sde}, that will be defined below. It has been demonstrated, see e.g.  \cite{Jourdain2005APA} that the density function of this solution solves \eqref{eqn:main}. The solution of  \eqref{eqn:main_sde} will be constructed through an iterative procedure, with key steps obtained utilizing the properties of the Calder\'on-Zygmund operator. 

We start with the following lemma on the existence and uniqueness of a class of stochastic differential equations defined by a stable process,
\begin{lem}
	Given initial condition $X_0$,  $\al$-stable process $S_t$,  and a bounded function $a_t:{\mathbb R} \rightarrow {\mathbb R}^d$, and $a_t$ is Lipschitz for any $t\in [0, T]$, the following stochastic differential equation, 
	\begin{align}
		X_t = X_0  + S_t + \int_0^t a_s(X_s) ds,
	\end{align}
has a unique (pathwise and in law) solution in the  Skorohod space $D([0,T], {\mathbb R}^d)$.
\end{lem}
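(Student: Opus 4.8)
The plan is to exploit the fact that the $\al$-stable process enters the equation additively, so that the problem can be solved pathwise. Fixing a realization $\om$ and writing $Y_t := X_0 + S_t$, which for almost every $\om$ is an element of $D([0,T],\Real^d)$ (the stable process has c\`adl\`ag paths a.s.), the equation becomes the deterministic integral equation
\begin{align*}
X_t = Y_t + \int_0^t a_s(X_s)\, ds .
\end{align*}
I would solve this by a Picard/contraction argument in $D([0,T],\Real^d)$ equipped with the uniform norm, and then verify that the resulting pathwise solution is an adapted process, which delivers both pathwise and in-law uniqueness.

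First I would define $\Phi(X)_t := Y_t + \int_0^t a_s(X_s)\,ds$. Since $a$ is bounded, the map $t\mapsto \int_0^t a_s(X_s)\,ds$ is Lipschitz in $t$, hence continuous, so $\Phi(X)$ is the sum of a c\`adl\`ag path and a continuous one and is therefore c\`adl\`ag; thus $\Phi$ maps $D([0,T],\Real^d)$ into itself. Recalling that $(D([0,T],\Real^d),\|\cdot\|_\infty)$ is a Banach space, because uniform limits of c\`adl\`ag functions are c\`adl\`ag, I would then use the uniform-in-$s$ Lipschitz constant $L$ of $a_s$ to obtain, for any $X,\tilde X$,
\begin{align*}
\abs{\Phi(X)_t - \Phi(\tilde X)_t} \le L\int_0^t \abs{X_s-\tilde X_s}\,ds ,
\end{align*}
and, iterating this bound in the standard way,
\begin{align*}
\sup_{0\le t\le T}\abs{\Phi^n(X)_t - \Phi^n(\tilde X)_t} \le \frac{(LT)^n}{n!}\,\sup_{0\le t\le T}\abs{X_t-\tilde X_t}.
\end{align*}
For $n$ large the constant $(LT)^n/n!$ is strictly less than one, so $\Phi^n$ is a contraction and the contraction mapping principle yields a unique fixed point $X$ on all of $[0,T]$ at once. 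This already gives existence and pathwise uniqueness for the fixed sample path.

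It then remains to upgrade this pathwise construction to a statement about the stochastic process. The fixed point is the uniform limit of the iterates $\Phi^n(Y)$, each of which is a continuous (indeed Lipschitz) functional of the forcing path $Y=X_0+S$ for the uniform norm; hence the solution map $Y\mapsto X$ is continuous, in particular measurable, and $X$ is adapted to the filtration generated by $X_0$ and $S$. This shows $X$ is a genuine $D([0,T],\Real^d)$-valued solution. Because every solution must coincide with this explicit functional of $(X_0,S)$, uniqueness holds pathwise, and the law of $X$ is the pushforward of the law of $(X_0,S)$ under the solution map, which gives uniqueness in law.

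The argument is essentially deterministic precisely because of additivity: the jumps of $S$ simply translate the forcing term and never interact with the drift. The only points requiring care, and the closest thing to an obstacle, are checking that the drift integral preserves the c\`adl\`ag property so that $\Phi$ genuinely acts on the Skorohod space, and that the fixed point depends measurably on the driving path so that one obtains an adapted process rather than a merely pathwise object; both are routine given the boundedness and the uniform Lipschitz bound on $a$. Were the noise multiplicative one would instead have to invoke the stochastic calculus for L\'evy-driven SDEs, but the additive structure here makes that unnecessary.
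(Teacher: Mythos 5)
Your proof is correct, and its core coincides with the paper's: the paper also obtains pathwise existence and uniqueness from a fixed point argument (citing the time-homogeneous case treated in \cite{Jourdain2005APA}), which you carry out explicitly via Picard iteration in $(D([0,T],\Real^d),\|\cdot\|_\infty)$ with the $(LT)^n/n!$ iterated-contraction trick. Where you genuinely diverge is the passage to uniqueness in law: the paper invokes the Yamada--Watanabe theorem, whereas you exploit the additive structure of the noise to exhibit the solution as an explicit measurable, adapted functional of $(X_0,S)$, so that the law of any solution is forced to be the pushforward of the law of $(X_0,S)$ under this functional. Your route is more elementary and self-contained --- Yamada--Watanabe is really only needed when the noise enters multiplicatively and the solution cannot be written as a deterministic functional of the driving path --- while the paper's route is shorter by citation and would survive in settings where such an explicit representation is unavailable. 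Two small caveats in your write-up. First, the lemma only states that $a_t$ is Lipschitz for each $t$; you implicitly strengthen this to a Lipschitz constant uniform in $t$ (and to joint measurability in $(t,x)$, so that $s\mapsto a_s(X_s)$ is integrable); without some uniformity or integrability of the Lipschitz constants the contraction estimate fails, but this looseness is in the statement itself and the paper's own sketch needs the same reading. Second, for adaptedness it is safer to argue, as you in effect do, that each Picard iterate $\Phi^n(Y)_t$ depends measurably on $(Y_s)_{s\le t}$ and then pass to the pointwise limit, rather than to lean on continuity of the solution map in the uniform topology: the uniform norm on $D$ is non-separable and its Borel $\sigma$-algebra is strictly larger than the canonical cylinder (Skorohod--Borel) $\sigma$-algebra, so ``continuous hence measurable'' is not quite the right justification, even though the conclusion is sound.
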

\begin{proof}
When $a_s$ is constant over time, this lemma is stated and proved in~\cite{Jourdain2005APA}. Similarly, here, the pathwise existence and uniqueness are the result of a fixed point argument. Then the Yamada-Watanabe Theorem, see e.g. ~\cite{ikeda2014stochastic} ensures the uniqueness in probability law. 
\end{proof}

Note that we are studying the probability measure of the paths, so let us first introduce the Skorohod space $D([0, T], {\mathbb R}^d)$, the set of c\'adl\'ad functions (functions that are right-continuous and have left limits everywhere) from $[0,T]$ to ${\mathbb R}^d$. Let  $\calp_T$ denote the set of all probability measures on $D([0, T], {\mathbb R}^d)$ that are absolutely continuous with respect to the Lebesgue measure, and
\begin{align*}
	{\tilde \calp}_T =\left\{ P\in \calp_T, P_0=\frac{|u_0|}{||u_0||_1}\right\},
\end{align*}
i.e. the subset of $\calp_T$ with the initial condition fixed. Define the following metric on ${\tilde \calp}_T$, for any $p>0$,
\begin{align*}
d_{T,p}(m_1, m_2) = \max\{ \rho_{p, T}(m_1, m_2), || f_1-f_2||_p \}
\end{align*}
where $ \rho_{p, T}$ denotes the $p$-Wasserstein distance on ${\tilde \calp}_T$, i.e., 
\begin{align*}
	\rho_{p, T}(P,Q) =\left\{\inf\int_{D_T\times D_T }\left[\sup_{t\le T} |x(t)-y(t)|\wedge 1\right]^pR(dx, dy)\right\}^{\frac{1}{p}},
\end{align*}
with the infimum is taken over measures $R$ with marginals $P$ and $Q$, and $R(x(0)=y(0) ) =1$. And $f_i$ denotes the density function of $m_i$ for $i=1,2$. 
It is worth noting that the metric space $(\calp_T, d_{T,p})$ is complete.  The stochastic stochastic differential equation is the following one,
\begin{align}
\label{eqn:main_sde}
X_t = X_0 + S_t + \int_0^t \int_{R^d} b(X_s, y) {\tilde P}_t(dy) ds.
\end{align}
with $ {\tilde P}_t(dy)$ denotes the probability law of $X_t$. 

\subsection{An operator defined on the space of probability measures}

Define an operator $\Psi^Y$ on ${\tilde \calp}_T$, indexed by an process $Y(t)$ as follow.  
For each $m\in {\tilde \calp}_T$, denote $ m^Y_t$ as the $Y_t$-weighted version of $m$, more specifically,   
\begin{align*}
 m^Y_t(A) = \|u_0\|_1\ex[{\bf 1}_A(Y_t) \text{sgn} (u_0(Y_0) ) ].
 \end{align*}
Thus, $\Psi_t^Y(m)$ is the probability measure induced by the solution to the following stochastic differential equation,
\begin{align*}
X_t = X_0 + S_t + \int_0^t \int_{R^d} b(X_s, y) m^Y_t(dy) ds.
\end{align*}
The following result is the key estimation for the global existence.  
\begin{lem}
	\label{lem:concentration_one}
	With the condition that $\delta>1$ and $\calk_b 1=0$, we can conclude that, 
There exists a constant $C>0$, such that, for any $m_1, m_2\in {\hat \calp}_T$, $d_{t,p}(\Psi_t^Y(m^1), \Psi_t^Y(m^2))\le C  \int_0^t d_{s,p}(m^1,m^2)ds$.  
\end{lem}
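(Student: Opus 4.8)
The plan is to realize both laws $\Psi_t^Y(m^1)$ and $\Psi_t^Y(m^2)$ on a single probability space through a synchronous coupling: let $X^1$ and $X^2$ solve the two defining stochastic differential equations driven by the \emph{same} $\al$-stable process $S_t$ and started from the \emph{same} initial condition $X_0$ (whose law has density $|u_0|/\|u_0\|_1$). Since $X_0^1=X_0^2$ almost surely, this coupling is admissible for $\rho_{p,t}$, and because $d_{t,p}=\max\{\rho_{p,t},\,\|f^{\Psi,1}-f^{\Psi,2}\|_p\}$ the estimate splits into two independent tasks: a pathwise (Wasserstein) bound on $X^1-X^2$, and an $L^p$ bound on the difference of the two output densities $f^{\Psi,i}_t$. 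The crucial structural observation is that the input metric $d_{s,p}$ carries two kinds of information simultaneously --- Wasserstein closeness of the marginals and $L^p$ closeness of their densities --- and it is the second, not the first, that will control the singular integral drift.

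For the Wasserstein part I would start from the coupled equations and write
\begin{align*}
X_t^1-X_t^2=\int_0^t\Big[\,\textstyle\int b(X_s^1,y)\,(m^1)^Y_s(dy)-\int b(X_s^2,y)\,(m^2)^Y_s(dy)\,\Big]\,ds,
\end{align*}
decomposing the bracket into a \emph{same-measure, different-point} piece $\int[b(X_s^1,y)-b(X_s^2,y)]\,(m^1)^Y_s(dy)$ and a \emph{same-point, different-measure} piece $\int b(X_s^2,y)\,[(m^1)^Y_s-(m^2)^Y_s](dy)$. The first piece is the increment of the singular integral $\calk_b$ between two points; here the hypothesis $\delta>1$ supplies the first-variable H\"older regularity of the kernel and $\calk_b 1=0$ lets me subtract the diagonal value to make the integral absolutely convergent, so that the piece is bounded by $C\,|X_s^1-X_s^2|$. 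The second piece is $\calk_b$ applied to a difference of reweighted input densities; since $m\mapsto m^Y_s$ is linear and bounded (the weight $\mathrm{sgn}(u_0(\cdot))$ is bounded by $1$ and the rescaling is by $\|u_0\|_1$), the $(p,p)$-boundedness of $\calk_b$ controls its $L^p_x$ norm by $C\,\|f^1_s-f^2_s\|_p\le C\,d_{s,p}(m^1,m^2)$. Pairing this second piece against the density of $X_s^2$ by H\"older, taking $\sup_{s\le t}$, and applying Gr\"onwall to the first piece would give $\rho_{p,t}(\Psi_t^Y(m^1),\Psi_t^Y(m^2))\le C\int_0^t d_{s,p}(m^1,m^2)\,ds$.

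For the density part I would pass to the mild (Duhamel) form of the Fokker--Planck equation satisfied by $f^{\Psi,i}_t$,
\begin{align*}
f^{\Psi,i}_t=p^\al_t\star f^{\Psi}_0-\int_0^t \nabla p^\al_{t-s}\star\big(f^{\Psi,i}_s\,b^i_s\big)\,ds,\qquad b^i_s(x)=\int b(x,y)\,(m^i)^Y_s(dy),
\end{align*}
so that the difference $f^{\Psi,1}_t-f^{\Psi,2}_t$ is the Duhamel integral of $(f^{\Psi,1}_s-f^{\Psi,2}_s)\,b^1_s+f^{\Psi,2}_s\,(b^1_s-b^2_s)$. Applying the gradient smoothing bound of Lemma~\ref{lem:operator_p} (with $m=p,\ q=p/2$), H\"older's inequality, and the $(p,p)$-boundedness of $\calk_b$ turns the second summand into $\|f^1_s-f^2_s\|_p\le d_{s,p}(m^1,m^2)$ and leaves the first summand for a Gr\"onwall argument; the time weight produced by the gradient estimate is integrable in the admissible range of $(\al,p,d)$ and is absorbed into $C$, yielding $\|f^{\Psi,1}_t-f^{\Psi,2}_t\|_p\le C\int_0^t d_{s,p}(m^1,m^2)\,ds$. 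Taking the maximum of the two bounds gives the statement.

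The hardest part, I expect, is the rigorous handling of the singular integral drift: making sense of, and bounding, both $\int b(x,y)\,d[(m^1)^Y_s-(m^2)^Y_s](y)$ and the first-variable increment $\calk_b(g)(X_s^1)-\calk_b(g)(X_s^2)$. This is exactly where the two standing hypotheses are used --- $\calk_b 1=0$ removes the non-integrable diagonal contribution and turns a principal-value singular integral into an absolutely convergent one, while $\delta>1$ provides the kernel regularity needed to dominate the first-variable increment by $|X_s^1-X_s^2|$ uniformly in the measure. The second delicate point is the joint use of the two components of $d_{s,p}$: Wasserstein closeness alone cannot bound a Calder\'on--Zygmund operator, so the $L^p$ density component is indispensable, and one must check that both components can be propagated through the coupling without losing the clean, non-singular time integral on the right-hand side. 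Once these singular-integral estimates are secured, the remaining Gr\"onwall and integration steps are routine.
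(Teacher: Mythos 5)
Your treatment of the Wasserstein component follows the paper's own proof almost exactly: the same coupling through a common driving process and common initial condition, and the same splitting of the drift difference into a same-measure/different-point piece controlled by the Lipschitz property of $\calk_b$ (Theorem 1.6 of \cite{ZhengLiTao2019}, using $\calk_b 1=0$ and $\delta>1$) and a same-point/different-measure piece controlled by $(p,p)$-boundedness --- this is precisely the paper's pair of inequalities \eqref{eqn:first_inq} and \eqref{eqn:second_inq}. You are in fact more careful than the paper on one point: the first piece produces $|X^1_s-X^2_s|$ on the right-hand side, not $d_{s,p}(m^1,m^2)$, so a Gr\"onwall step is needed to close the estimate; you say this explicitly, while the paper silently skips it. Where you genuinely diverge is the density component of $d_{t,p}$: the paper disposes of $\|f_1-f_2\|_p$ by citing a process-to-density comparison from \cite{olivera2019}, so that the whole lemma reduces to a single pathwise bound, whereas you re-derive the density estimate analytically from the Duhamel form of the Fokker--Planck equation using the smoothing bounds of Lemma \ref{lem:operator_p}. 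Your route is more self-contained and reuses the local-existence machinery; the paper's route avoids all smoothing estimates but leans entirely on the cited result.

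However, your density argument as written has a quantitative gap. The gradient estimate of Lemma \ref{lem:operator_p} with $m=p$, $q=p/2$ inserts the weight $(t-s)^{-\frac{d}{\al p}-\frac{1}{\al}}$ into the Duhamel integral, and this singular weight cannot simply be ``absorbed into $C$'': writing $\gamma=\frac{d}{\al p}+\frac{1}{\al}$, even for the non-decreasing function $s\mapsto d_{s,p}(m^1,m^2)$ one only gets $\int_0^t (t-s)^{-\gamma}\,d_{s,p}(m^1,m^2)\,ds \le C\, t^{1-\gamma}\, d_{t,p}(m^1,m^2)$, and $t^{1-\gamma}d_{t,p}$ is not dominated by $\int_0^t d_{s,p}\,ds$ for small $t$ (take $d_{s,p}$ constant in $s$). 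So your argument yields a Volterra inequality with a weakly singular kernel rather than the clean form stated in the lemma. That weaker inequality still suffices for the paper's purpose --- iterating it produces Gamma-function, i.e.\ factorial-type, decay, which is all the global existence theorem needs --- but you should either state the lemma in this modified form or add the extra bookkeeping to recover the stated one. Finally, note a gap you share with the paper: invoking the Lipschitz boundedness of $\calk_b$ on the same-measure/different-point piece requires the weighted density of $(m^1)^Y_s$ itself to be Lipschitz (Theorem 1.6 of \cite{ZhengLiTao2019} maps $Lip(\epsilon)$ to $Lip(\epsilon)$, with a constant depending on the $Lip$ norm of the input), a regularity that the hypotheses on ${\hat \calp}_T$ alone do not provide; this is presumably why the global existence theorem assumes $u_0\in Lip(1)$, but neither your argument nor the paper's shows that this regularity propagates to the measures appearing in the iteration.
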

\begin{proof}
First of all, we know that, for $p>1$,
\begin{align*}
	\rho_{p, t} ( \Psi_t^Y(m^1), \Psi_t^Y(m^2)) & \le \left(\ex\left[\sup_{s\le t}|X_s^1-X_s^2|^p\right]\right)^{\frac{1}{p}}\\ &\le \left(\ex\left[\int_0^t\Big|\int_{R^d} b(X^1_s, y)m^1(dy)-b(X^2_s, y)m^2(dy) \Big|^p\right]\right)^{\frac{1}{p}}
\end{align*}
Similarly,  from relation between density and the process, see, e.g., \cite{olivera2019}, we can conclude that, 
\begin{align*}
	\|f_1-f_2\|_p& \le \left(\ex\left[\sup_{s\le t}|X_s^1-X_s^2|^p\right]\right)^{\frac{1}{p}}\\ &\le \left(\ex\left[\int_0^t\Big|\int_{R^d} b(X^1_s, y)m^1(dy)-b(X^2_s, y)m^2(dy) \Big|^p\right]\right)^{\frac{1}{p}}
\end{align*}
Hence, we have, 
\begin{align*}
	d_{p, t} ( \Psi_t^Y(m^1),\Psi_t^Y(m^2)) & \le \left(\ex\left[\sup_{s\le t}|X_s^1-X_s^2|^p\right]\right)^{\frac{1}{p}}\\ &\le \left(\ex\left[\int_0^t\Big|\int_{R^d} b(X^1_s, y)m^1(dy)-b(X^2_s, y)m^2(dy) \Big|^p\right]\right)^{\frac{1}{p}}
\end{align*}
By triangular inequality, it suffices to establish the following two inequalities, 
\begin{align}
\label{eqn:first_inq}
	\left(\ex_\Pi\Big| \int_0^t \int_{\Real^d} b(X^1_s, y) m^1_t(dy) ds-\int_0^t \int_{\Real^d} b(X^2_s, y) m^1_t(dy)  ds\Big| ^p \right)^{1/p} \le C  \int_0^t d_{s,p}(m^1,m^2)ds,
\end{align}
and
\begin{align}
\label{eqn:second_inq}
	\left(\ex_\Pi\Big| \int_0^t \int_{\Real^d} b(X^2_s, y) m^1_t(dy) ds-\int_0^t \int_{\Real^d} b(X^2_s, y) m^2_t(dy)  ds\Big| ^p \right)^{1/p}\le C  \int_0^t d_{s,p}(m^1,m^2)ds.
\end{align}
The inequality \eqref{eqn:first_inq} follows from the Lipschitz property of the Calder\'on-Zygmund operator, again (Theorem 1.6) in~\cite{ZhengLiTao2019}. The inequality \eqref{eqn:second_inq} holds because of  the boundedness of the Calder\'on-Zygmund operator. More specifically, 
\begin{align*}
	&\left(\ex_\Pi\Big| \int_0^t \int_{\Real^d} b(X^2_s, y) m^1_t(dy) ds-\int_0^t \int_{\Real^d} b(X^2_s, y) m^2_t(dy)  ds\Big| ^p \right)^{1/p} \\ 
	\le &\left(\ex_\Pi\int_0^t  \Big|\int_{\Real^d} b(X^2_s, y) f^1- f_2 dy  \Big| ^p ds \right)^{1/p} \\ \le &\int_0^t  \left(\ex_\Pi\Big|\int_{\Real^d} b(X^2_s, y) f^1- f_2 dy  \Big| ^p\right)^{1/p} ds \\
	\le&   C  \int_0^t d_{s,p}(m^1,m^2)ds
\end{align*}
\end{proof}

\subsection{Global Existence}

\begin{thm}
	Under the following conditions, we will have the global existence of the solution to equation \eqref{eqn:main}.
	\begin{itemize}
		\item[i.]
		The Calder\'on-Zygmund operator with $\delta>1$, and $\calk_b1=0$.
		\item[ii.]
		the initial condition $u_0$ is a Lipschitz function, in other word, $u_0\in Lip(1)$. 
	\end{itemize}
\end{thm}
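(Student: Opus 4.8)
The plan is to realize the nonlinear equation \eqref{eqn:main} as the forward Kolmogorov equation of the McKean--Vlasov type process \eqref{eqn:main_sde}, to produce a solution of that process by a Picard iteration in the complete metric space $({\tilde \calp}_T, d_{T,p})$, and finally to read off a weak solution of \eqref{eqn:main} from the time-marginal densities of the process. Because the target horizon $T<\infty$ is arbitrary, constructing the process on every such $[0,T]$ yields global existence. Throughout, the signed datum $u_0$ is encoded by the probability law $|u_0|/\|u_0\|_1$ together with the multiplicative weight $\text{sgn}(u_0(\cdot))$, which is exactly the role of the weighted measure $m_t^Y$; the identity $B(u)(x)=\|u_0\|_1\,\ex[b(x,Y_s)\,\text{sgn}(u_0(Y_0))]$ ties the drift of \eqref{eqn:main_sde} back to the Calder\'on--Zygmund operator $\calk_b$.

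The iteration is set up as follows. First I would start from $m^{(0)}$, the law flow of the free process $X_0+S_t$, and given $m^{(n)}$ with associated process $Y^{(n)}$ define $m^{(n+1)}=\Psi^{Y^{(n)}}(m^{(n)})$ by solving the SDE whose drift is frozen to the previous measure. Each such step is well posed by the preceding existence/uniqueness lemma, provided the drift $a_s(x)=\int_{\Real^d} b(x,y)\,(m^{(n)})^{Y^{(n)}}_s(dy)$ is bounded and Lipschitz in $x$ uniformly on $[0,T]$; this is where hypotheses (i) and (ii) enter, since $\calk_b 1=0$ and $\delta>1$ force $\calk_b$ to send bounded Lipschitz densities into $Lip(\eps)$ (via Theorem 1.6 of \cite{ZhengLiTao2019}), and $u_0\in Lip(1)$ supplies the base regularity that propagates along the flow. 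Lemma \ref{lem:concentration_one} then gives $d_{t,p}(m^{(n+1)},m^{(n)})\le C\int_0^t d_{s,p}(m^{(n)},m^{(n-1)})\,ds$, and iterating this convolution bound from the crude first-step estimate $\sup_{s\le T} d_{s,p}(m^{(1)},m^{(0)})\le M$ produces
\begin{align*}
d_{t,p}(m^{(n+1)},m^{(n)})\le M\,\frac{(Ct)^n}{n!}.
\end{align*}
The right-hand side is summable, so $(m^{(n)})$ is Cauchy and, by completeness of $({\tilde\calp}_T,d_{T,p})$, converges to a limit $m^\ast$, which is a fixed point of the self-consistent map and hence the law of a solution to \eqref{eqn:main_sde} on $[0,T]$.

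It then remains to transfer this to the PDE. The time-marginals of $m^\ast$ are absolutely continuous with densities $f_s$, and the signed, $\|u_0\|_1$-scaled version $u(s,\cdot)$ lies in $L_x^p$; applying It\^o's formula to $\psi(X_s,s)$ for a test function $\psi\in C_0^\infty$, taking expectations, and using that $-(-\Delta)^{\al/2}$ is the generator of $S_t$ gives exactly the weak identity \eqref{eqn:weakSoln}. This is the density--process correspondence already recorded in \cite{Jourdain2005APA,olivera2019}. Since $T$ was arbitrary this holds on every finite horizon, establishing global existence.

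I expect the main obstacle to be the regularity bookkeeping needed to keep the iteration inside the class where the existence lemma and Lemma \ref{lem:concentration_one} apply: one must verify that $a_s$ stays bounded and Lipschitz uniformly in $n$ and $s\le T$, which requires propagating the Lipschitz bound on the densities $f^{(n)}_s$ through the smoothing action of $p^\al_t$ and the boundedness of $\calk_b$ on $Lip(\eps)$. A secondary subtlety is that Lemma \ref{lem:concentration_one} is stated for a fixed weighting process $Y$, whereas the iterates use $Y^{(n)}$ that vary with $n$; closing the argument cleanly needs the estimate to be uniform in the weighting process, or a combined bound controlling simultaneously the change in the driving measure and in $Y$, and it is precisely the conditions $\delta>1$ and $\calk_b1=0$ that make this uniformity available.
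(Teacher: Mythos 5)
Your proposal follows essentially the same route as the paper: Picard iteration of the operator $\Psi^Y$ on $({\tilde\calp}_T, d_{T,p})$, the factorial bound $C^n/n!$ obtained by iterating Lemma \ref{lem:concentration_one}, completeness of the metric space to extract the limit solving \eqref{eqn:main_sde}, and the density--process correspondence of Proposition 4.4 in \cite{Jourdain2005APA} to read off a weak solution of \eqref{eqn:main} on each finite horizon. Your version is in fact more explicit than the paper's about where hypotheses (i) and (ii) enter and about the subtlety that the weighting process $Y^{(n)}$ changes from one iterate to the next --- a uniformity issue the paper's two-line argument passes over silently.
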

\begin{proof}
Start with an arbitrary stochastic process $Y^0_t \in D([0,T], \Real^d)$, for any $n\ge 1$, define, $Y^n_t:= \Psi^{Y^{n-1}_t}$. Then, iterating the inequality in Lemma \ref{lem:concentration_one}, we get,
\begin{align*}
	d_{T,p}( \Psi_T^n(m^1), \Psi_T^n(m^2))\le  \frac{C^n}{n!} d_{T,p} (\Psi_T^0(m^1),\Psi_T^0(m^2)).
\end{align*}
This means that $Y_n^t$ is a Cauchy sequence in $d_{T,p}$, hence there exists a limit, denoted as $Y_\infty^t$. It can be verified that $Y_\infty^t$ is a strong solution to the stochastic differential equation \eqref{eqn:main_sde}. Proposition 4.4 in \cite{Jourdain2005APA} confirms that this solution will produce a density function that is a solution to equation \eqref{eqn:main}. 
\end{proof}
\bibliographystyle{abbrv}
\bibliography{Lu}

\end{document}